\title[The Dual of Quantifier Elimination: Boolean Elimination over
$\mathbb{C}$ and $\mathbb{R}$]{The Dual of Quantifier Elimination:\\Boolean
Elimination over $\mathbb{C}$ and $\mathbb{R}$}
\author{Matthew Frank}
\date{December 22, 2025}
\address{mfrank@uchicago.edu}
\newtheorem{theorem}{Theorem}[section]
\theoremstyle{remark}
\newtheorem{corollary}{Corollary}[section]
\newtheorem{lemma}{Lemma}[section]
\newcommand{\ab}{\mathbf{a}}
\newcommand{\AEf}{\texorpdfstring{\forall\exists}{AE}}
\newcommand{\EA}{\texorpdfstring{\exists\forall}{EA}}
\newcommand{\Ed}{\texorpdfstring{\exists^d}{E\string^d}}
\newcommand{\E}{\texorpdfstring{\exists}{E}}
\newcommand{\C}{\texorpdfstring{\mathbb{C}}{C}}
\newcommand{\R}{\texorpdfstring{\mathbb{R}}{R}}
\newcommand{\Q}{\texorpdfstring{\mathbb{Q}}{Q}}
\newcommand{\Ac}{\mathcal{A}}
\newcommand{\Lc}{\mathcal{L}}
\newcommand{\M}{\mathcal{M}}
\begin{document}
\subjclass[2020]{Primary 03C10; Secondary 03C60, 14P10, 14Q20.}

\begin{abstract}
We show that every finite Boolean combination of polynomial equalities and
inequalities in $\C^n$ admits two uniform normal forms: an $\exists\forall$
form and a $\forall\exists$ form, each using a \emph{single} polynomial
equation. Both forms use only one existentially quantified variable and one
universally quantified variable. Optimality results demonstrate that
no purely existential or universal normal form is possible over $\C$.

These results extend to sets constructible from entire functions,
and to quantifier-free formulas in functional languages over infinite fields of
characteristic $0$. A corollary shows Zilber's conjecture on quasiminimality
equivalent to its subcase quantifying a single equation.

Over $\R$ and $\Q$, similar results hold, including a singly-quantified $\exists$
form for Boolean combinations of equations and inequations, an $\exists$ form
for $\R$ established by prior methods, and other results for order inequalities
parallel to the forms over $\C$.

These results provide a dual to classical quantifier elimination: 
instead of removing quantifiers at the cost of increased Boolean complexity,
they remove Boolean structure at the cost of a short, fixed quantifier prefix.
The constructions have linear degree bounds and are explicit.
\end{abstract}

\maketitle

\section{Introduction}

Quantifier elimination replaces quantified formulas by quantifier-free
ones, typically at a cost of more complicated Boolean combinations.
This paper studies the complementary direction:
\emph{eliminating Boolean structure} at the cost of introducing
a short block of quantifiers.

Our main result here shows that over $\C$, every finite Boolean
combination of polynomial equations and inequations can be expressed
using one polynomial equation with one existential quantifier and one
universal quantifier, with the quantifiers in either order. More precisely:

\begin{theorem}\label{main}[Boolean Elimination over $\C$]
Let $\varphi(x)$ be any Boolean combination of polynomial equations and
inequations in variables $x\in\mathbb{C}^n$.
Then there are polynomials
$$p, q \in\mathbb{C}[x,a,b]$$
such that for all $x\in\mathbb{C}^n$,
\begin{align*}
\varphi(x)&\Longleftrightarrow(\exists a\in\C)(\forall b\in\C)\,p(x,a,b)=0\\
\varphi(x)&\Longleftrightarrow(\forall a\in\C)(\exists b\in\C)\,q(x,a,b)=0
\end{align*}
\end{theorem}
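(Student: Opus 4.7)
The plan is to prove the $\exists\forall$ form first via two encoding tricks combined with disjunctive normal form, and then obtain the $\forall\exists$ form by dualizing through $\neg\varphi$ and reswapping an inequation for an equation.

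For the $\exists\forall$ form, the two key observations are: (i) a single universal quantifier $(\forall b\in\C)$ applied to $p(a,b,x)=0$ is equivalent to the vanishing of every coefficient of $p$ viewed as a polynomial in $b$, so it encodes an arbitrary finite conjunction of polynomial equations $F_1=\cdots=F_N=0$ via $p(a,b,x):=\sum_{j=1}^N b^{j-1}F_j(a,x)$; and (ii) the Rabinowitsch trick $g(x)\ne 0 \iff (\exists a)\,a\,g(x)-1=0$ turns any inequation into an equation at the cost of one existential variable. Given these, I would put $\varphi$ in DNF as $\bigvee_i[\bigwedge_j f_{ij}=0 \wedge \bigwedge_\ell g_{i\ell}\ne 0]$, collapse each block of inequations into a single $h_i:=\prod_\ell g_{i\ell}\ne 0$, and rewrite the $i$-th disjunct as $(\exists a)\big[\bigwedge_j f_{ij}(x)=0 \wedge a h_i(x)-1=0\big]$. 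Pulling $\exists a$ out across the disjunction yields $\varphi(x)\iff (\exists a\in\C)\big[(x,a)\in V\big]$, where $V\subseteq\C^{n+1}$ is the Zariski closed set $\bigcup_i V_i$ obtained as the union of the closed subvarieties defined by the individual disjuncts. A finite system of generators for the ideal of $V$---for instance, all products of one generator from each $V_i$---folded by the $\forall b$ trick then produces the desired $p(a,b,x)$.

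For the $\forall\exists$ form, I would apply the $\exists\forall$ construction just established to the Boolean combination $\neg\varphi$, obtaining $\neg\varphi(x)\iff(\exists a)(\forall b)\,r(a,b,x)=0$ for some $r\in\C[a,b,x]$. Negation gives $\varphi(x)\iff(\forall a)(\exists b)\,r(a,b,x)\ne 0$, and the remaining task is to turn the inner $\ne 0$ into a polynomial equation without spending a second existential variable. Setting
\[
q(a,b,x)\;:=\;1-b\cdot r(a,b,x),
\]
I would check that $(\exists b)\,q(a,b,x)=0$ is equivalent to $r$ not being identically zero as a polynomial in $b$: if $r\equiv 0$ in $b$ then $q\equiv 1$ has no root; conversely if some coefficient $r_j(a,x)\ne 0$ then $b\cdot r(a,b,x)$ is a non-constant polynomial in $b$ with zero constant term, hence surjective onto $\C$ by the fundamental theorem of algebra, so $b\cdot r(a,b,x)=1$ for some $b$. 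This gives $\varphi(x)\iff(\forall a)(\exists b)\,q(a,b,x)=0$.

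The main conceptual obstacle is the dualization step: after negating the $\exists\forall$ form, one is left with an inner $\ne 0$ but has already spent the only available existential quantifier, so the standard Rabinowitsch trick is unavailable. The substitution $q:=1-b\cdot r$ resolves this and is essentially forced by the requirement that $q$ have a root in $b$ precisely when $r$ is not identically zero in $b$; this works because over an algebraically closed field a univariate polynomial fails to have a root iff it is a nonzero constant. Everything else (DNF, the two basic encodings, and closure of closed subvarieties under finite unions and projection) is routine, and the advertised linear degree bounds should follow by tracking products and polynomial degrees through each step.
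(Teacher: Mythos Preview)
Your argument is correct for both normal forms.

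For the $\exists\forall$ half you use the same ingredients as the paper---DNF, the Rabinowitsch trick for the inequations, and the observation that $(\forall b)$ applied to a polynomial equation forces every $b$-coefficient to vanish---but you assemble them differently. Rather than listing all cross-products of generators of the $V_i$ and then folding, the paper writes
\[
p=\prod_i\Bigl[(1-a\prod_k u_{ik})+\sum_j t_{ij}\,b^{\,j}\Bigr]
\]
and argues by pigeonhole that this product vanishes for every $b$ iff some single factor is identically zero in $b$. That keeps the degree in $b$ equal to the total number $e$ of equations, whereas your cross-product description produces $\prod_i(e_i+1)$ generators and hence exponential degree in $b$ after folding; so your closing remark that linear bounds follow ``by tracking products'' does not hold for your construction as stated.

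For the $\forall\exists$ half the routes genuinely diverge. The paper works directly from CNF and uses Lagrange interpolation in $a$ to isolate the $i$th conjunct when $a=i$, together with an auxiliary factor $1-b\prod_i(a-i)$ to dispose of the remaining values of $a$. Your dualization---apply the $\exists\forall$ result to $\neg\varphi$, negate, then recycle the existential variable via $q:=1-b\,r$---is a clean alternative that reuses the first half wholesale and avoids interpolation entirely. The paper's direct construction gives explicit degree bounds read off from the CNF shape of $\varphi$; yours is shorter to state but inherits whatever degrees arose in the $\exists\forall$ encoding of $\neg\varphi$.
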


For example, if
$$\varphi(y,z) := (y=0 \wedge z\neq 0) \vee (z=0 \wedge y\neq 0)$$
then the constructions here show
$$\varphi \iff (\exists a)(\forall b)\Big[(1-az)+by\Big]
\Big[(1-ay)+bz\Big]=0$$
and also
$$\varphi \iff (\forall a)(\exists b)\Big[1-b(a-1)(a-2)\Big]
\Big[(a-2)yz+(a-1)(1-by)(1-bz)\Big]=0.$$

Even more complicated formulas admit normal forms with only $\exists$
and one $\forall$, in either order; having both normal forms provides flexibility
for use inside larger formulas. Furthermore the degrees of the
resulting polynomials are linearly bounded by the length of the original
formula in conjunctive or disjunctive normal form.

The main result may also be generalized complex-geometrically or
model-theoretically, with the same proofs.

\begin{theorem}\label{complex-geometric}[Reformulation in Complex Geometry]
Let $\Ac$ be an algebra of entire functions on $\C^n$, closed under polynomial
operations. Let $\Ac[a,b]$ denote the extension of this algebra by polynomials
in two complex variables $a$ and $b$. Let $S\subset\C^n$ be any constructible
set defined over $\Ac$, i.e. a finite Boolean combination of zero-sets
$\{x:f(x)=0\}$ where $f\in\Ac$. Then $S$ can be represented as a locus
of identical vanishing: there is a function $P\in\Ac[a,b]$ such that
$$x\in S\Longleftrightarrow \exists a\in\C \text{ where }
P(x,a,b) \text{ vanishes identically for all }b\in\C.$$
Dually, $S$ can be represented as a locus of universal solvability:
there is a function $Q\in\Ac[a,b]$ such that
$$x\in S\Longleftrightarrow \forall a\in\C, \text{ the function }
b\mapsto Q(x,a,b) \text{ has a zero in }\C.$$
\end{theorem}

The applies to several possible algebras, including the algebra of polynomials,
the algebra of exponential polynomials, the algebra of entire functions
of order $<\rho$ (for some $\rho>0$), the algebra of entire functions of
finite order, and the algebra of all entire functions.

\begin{theorem}\label{model-theoretic}[Reformulation in Model Theory]
Let $\Lc$ be a functional language containing $1,+,-,\times$.
Let $\M$ be an $\Lc$-structure satisfying
the axioms for an infinite field of characteristic $0$. Let $\varphi$ be any
quantifier-free formula in $\Lc$ over the variables $x_1,\ldots,x_n$.
Then there are $\Lc$-terms $p,q$ formed from polynomial
combinations of $1,a,b$ and the terms occurring in $\varphi$, such that
\begin{align*}
\M\models\varphi(x)&\Longleftrightarrow
(\exists a)(\forall b)\,p(x_1,\ldots,x_n,a,b)=0\\
\M\models\varphi(x)&\Longleftrightarrow
(\forall a)(\exists b)\,q(x_1,\ldots,x_n,a,b)=0
\end{align*}
\end{theorem}

This applies to several possible languages, including the pure polynomial
language, the language adding $\exp$, the language adding $1/\Gamma$,
the language adding all univariate entire functions, and the language of all
entire functions in any number of variables. In particular, we obtain an
equivalence for Zilber's conjecture on the quasiminimality of $\exp$ (on
which see Kirby's survey for more context).

\begin{corollary}
Let $\Lc$ be the language $\{+,-,\times,\exp\}$, and let quantifiers $Q_i$
be either $\forall$ or $\exists.$ Then the following are equivalent:
\begin{itemize}
\item
For any $n$, any sequence of quantifiers $Q_i$,
and any quantifier-free formula $\varphi(z,x_1,\ldots,x_n)$ in $\Lc$, the set
$$\{z \in C:Q_1x_1\ldots Q_n x_n\, \varphi(z,x_1,\ldots,x_n)\}\phantom{=0}$$
is either countable or cocountable.
\item
For any $n$, any sequence of quantifiers $Q_i$,
and any term $t(z,x_1,\ldots,x_n)$ in $\Lc$, the set
$$\{z \in C:Q_1x_1\ldots Q_n x_n\, t(z,x_1,\ldots,x_n)=0\}$$
is either countable or cocountable.
\end{itemize}
\end{corollary}

In other words, Zilber's conjecture is equivalent to its subcase on the
quasiminimality of sets defined by quantifying a single equation,
rather than sets defined by quantifying an arbitrary formula.

\pagebreak

The rest of the paper is organized as follows: Section 2 establishes notation.
Section 3 reviews context. Sections 4 and 5 construct the $\exists\forall$
and $\forall\exists$ polynomials for $\C$ explicitly, and provide bounds
on their degrees. Section 6 shows that these quantifier patterns are
optimal: there are simple formulas over $\C$ that cannot be expressed
by purely existential or purely universal quantification of a single equation.

Section 7 turns to other formally real fields, and shows that any Boolean
combination of equations and inequations ($u\neq 0$) over them can
be represented with a single $\exists$ and a single equality. Section 8
constructs the $\exists^d$ and $\forall\exists$ polynomials for Boolean
combinations over $\R$ that may involve inequalities ($u>0$).
Sections 9 and 10 explore prior results over $\R$ and forms over $\Q$.
Section 11 concludes with directions for further research.

\section{Notation}

We use the following notation:
\begin{align*}
a,b &\in \C \text{ (scalar quantified variables)},\\
d,e,f &\in \mathbb{N},\\
h & \in \{1,\ldots,d\}, \ h\neq i,\\
i,j,k &\in \{1,\ldots,d\}, \{1,\ldots,e\}, \{1,\ldots,f\} \text{ respectively},\\
m,n &\in \mathbb{N},\\
p,q &\in \text{polynomials},\\
r,s &\in \R,\\
t,u &\in \text{terms using }x,\\
v,w &\in \Q,\\
x &\in \C^n,\\
y,z &\in \C.
\end{align*}
The constructions below are uniform in the dimension of $\C^n$, and where
we need to refer to a vector there, we use $x$. The quantified variables
$a,b,r,s,v,w,y,z$ always range over scalars, whether in $\C$, $\R$, or $\Q$. 

\section{Context}

There are three key ingredients in the constructions and proofs here.

First is the algebraic encoding of propositional combinations
and inequalities over $\R$, which are used here for
$\R$ and as inspiration for $\C$:
\begin{lemma}
For all $t_1,t_2,u\in\R$:
\begin{align*}
t_1=0 \vee t_2=0 &\iff t_1t_2=0\\
t_1=0 \wedge t_2=0 &\iff t_1^2+t_2^2=0\\
u\neq0 &\iff (\exists b)\, bu=1\\
u\ge0 &\iff (\exists s)\, u=s^2\\
u>0 &\iff (\exists s)\, s^2u=1
\end{align*}
\end{lemma}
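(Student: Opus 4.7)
The plan is to verify each of the five equivalences using elementary properties of $\R$ as a real closed (ordered) field; no heavy machinery is needed. I would dispatch them in the order they appear, noting which structural property of $\R$ each relies on.

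First, the disjunction identity $t_1 = 0 \vee t_2 = 0 \iff t_1 t_2 = 0$ reduces to the fact that $\R$ is an integral domain: $t_1 \cdot 0 = 0 \cdot t_2 = 0$ gives one direction, and the absence of zero divisors gives the other. The conjunction identity $t_1 = 0 \wedge t_2 = 0 \iff t_1^2 + t_2^2 = 0$ uses formal reality: $r^2 \ge 0$ for every $r \in \R$, so a sum of two squares vanishes iff each square does, and $r^2 = 0$ forces $r = 0$.

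The three existential equivalences all rest on further real-closed-field structure. For $u \neq 0 \iff (\exists b)\, bu = 1$ I would use in the forward direction that nonzero reals are invertible (take $b = 1/u$), and in the reverse that $bu = 0 \neq 1$ whenever $u = 0$. For $u \ge 0 \iff (\exists s)\, u = s^2$ I would invoke closure of $\R$ under square roots of nonnegatives (take $s = \sqrt{u}$), with the reverse being immediate since squares are nonnegative. For $u > 0 \iff (\exists s)\, s^2 u = 1$ I would combine the two: if $u > 0$ then $1/u$ is positive, hence a square $s^2$; conversely $s^2 u = 1$ together with $s^2 \ge 0$ forces $s \neq 0$ and $u = 1/s^2 > 0$.

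Since each clause is a one-line consequence of a single axiom—no zero divisors, nonnegativity of squares, existence of inverses, or existence of real square roots of nonnegatives—there is no substantive obstacle. The actual content of the lemma is in cataloguing these atomic translations, because they are the building blocks the later sections will recombine, together with the complex-field ingredients introduced afterwards, into the more elaborate $\exists\forall$ and $\forall\exists$ normal forms that are the subject of the paper.
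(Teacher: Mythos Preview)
Your proposal is correct: each of the five equivalences is exactly the one-line consequence you describe, and the structural properties you invoke (integral domain, formal reality, field inverses, real square roots) are the right ones. The paper itself states this lemma without proof, treating these encodings as standard background; your write-up simply makes explicit what the paper leaves implicit.
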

Since only the first and third of these hold over $\C$, an explicit method
for encoding propositional structure requires additional ingredients.

The second key ingredient here is Lagrange interpolation. An
ordinary example is that, if we want to construct a polynomial $p(a)$
with specified values $t_1,t_2,t_3$ at the arguments $1,2,3$, we can use
$$p(a)=
t_1 \frac{(a-2)(a-3)}{(1-2)(1-3)} +
t_2 \frac{(a-1)(a-3)}{(2-1)(2-3)} +
t_3 \frac{(a-1)(a-2)}{(3-1)(3-2)}.$$
The application here is simpler, without the divisions.
\begin{lemma}
The polynomial
$$p(a)=\sum_i t_i \prod_{h\neq i}(a-h)$$
vanishes at $a=i$ iff $t_i=0$.
\end{lemma}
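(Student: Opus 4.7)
The plan is to evaluate $p(a)$ directly at $a = i$ for an arbitrary fixed $i \in \{1,\ldots,d\}$ and see which terms of the sum survive. Expanding gives
$$p(i) = \sum_j t_j \prod_{h \neq j}(i-h).$$
The key observation is that whenever $j \neq i$, the index $i$ itself lies in the range $\{h \in \{1,\ldots,d\} : h \neq j\}$, so the factor $(i - i) = 0$ appears inside $\prod_{h \neq j}(i - h)$, forcing that entire $j$-summand to vanish. Hence only the $j = i$ term survives, and
$$p(i) = t_i \prod_{h \neq i}(i - h).$$

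The remaining product $\prod_{h \neq i}(i-h)$ is a product of nonzero integers --- up to sign it equals $(i-1)!\,(d-i)!$ --- so it is a nonzero constant. Therefore $p(i) = 0$ iff $t_i = 0$, which is the claim. There is no real obstacle: the whole argument is a one-line computation once one notices that every $j \neq i$ summand already contains a zero factor at $a = i$. This selection mechanism is precisely what will let $p$ serve as an algebraic surrogate for the disjunction $\bigvee_i (t_i = 0)$ after one existentially quantifies $a$ over $\{1,\ldots,d\}$, and it is reused throughout the constructions that follow.
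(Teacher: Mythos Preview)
Your proof is correct: evaluating at $a=i$ kills every summand with $j\neq i$ because the factor $(i-i)$ appears, leaving $p(i)=t_i\prod_{h\neq i}(i-h)$, and that product of nonzero integers is nonzero. The paper states this lemma without proof, treating it as a standard fact about Lagrange interpolation; your argument is exactly the expected one-line verification.
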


The third key ingredient is basic quantifier elimination over an
algebraically closed field, where we need only the elimination of a
single existentially quantified equation:
\begin{lemma}
 For any polynomial $q(z)=\sum_{i=1}^d q_i z^i$,
\begin{align*}
(\exists z\in\C)\,(q(z)=0) &\iff
q_0=0 \vee q_1\neq 0 \vee \cdots \vee q_d\neq 0\\
(\forall z\in\C)\,(q(z)\neq0) &\iff
q_0\neq0 \wedge q_1= 0 \wedge \cdots \wedge q_d= 0
\end{align*}
\end{lemma}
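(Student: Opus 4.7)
The plan is to observe that the two equivalences are De Morgan duals of each other --- negating both sides of the first yields precisely the second --- so it suffices to establish the first. For that I would split into the two implications, invoking the Fundamental Theorem of Algebra as the sole nontrivial ingredient.

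For the $(\Leftarrow)$ direction, I would case on which disjunct holds. If $q_0 = 0$, then $z = 0$ is an explicit root. Otherwise some $q_i \neq 0$ with $i \geq 1$, so $q$ is a nonconstant polynomial over $\C$ and therefore has a root by the Fundamental Theorem of Algebra.

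For the $(\Rightarrow)$ direction, I would argue by contrapositive. If the disjunction fails, then $q_0 \neq 0$ while $q_1 = q_2 = \cdots = q_d = 0$, so $q(z)$ equals the nonzero constant $q_0$ for every $z \in \C$, and no root exists.

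There is essentially no obstacle: the lemma is a compact repackaging of the statement that $q \in \C[z]$ has no root iff it is a nonzero constant, which is exactly the content of $\C$ being algebraically closed. The only care needed is to read the polynomial as $q(z) = \sum_{i=0}^d q_i z^i$, so that the $q_0$ appearing in the equivalences is a genuine coefficient of the constant term.
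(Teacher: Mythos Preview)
Your proposal is correct and matches the paper's approach: the paper does not give a detailed proof but simply remarks that the lemma ``restates the fundamental theorem of algebra,'' which is exactly the content of your argument. Your observation that the sum should start at $i=0$ so that $q_0$ is the constant term is also apt; this is a typo in the statement.
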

This restates the fundamental theorem of algebra, and we use it for the
impossibility result on $\exists$. 
We do not use the longer and more complicated results of real quantifier
elimination here. (See Swan's expository article on quantifier elimination
over $\R$ and $\C$ for general background, and see Basu's survey for more
details on the complexity, which contrasts with the linear bounds here.)

We call the transformations of this paper \emph{Boolean elimination}
because they provide a dual to quantifier elimination: instead of
removing quantifiers at the cost of more propositional complexity,
we remove propositional complexity at the cost of a short, fixed
quantifier prefix. 

\section{Normalizing to $\EA$ over $\C$}

The constructions in this paper all assume the input formula is presented
in conjunctive or disjunctive normal form. The two quantified variables
in the resulting $\exists\forall$ and $\forall\exists$ forms correspond
to the two Boolean layers in these representations, either
disjunctions of conjunctions or conjunctions of disjunctions.

We start here with $\varphi$ in disjunctive normal form:
\begin{align*}
\varphi
&:=\bigvee_i\Big(\bigwedge_j t_{ij}= 0\;\wedge\;\bigwedge_k u_{ik}\neq 0\Big)
\end{align*}
The idea is to find a polynomial which encodes each disjunct by a factor
that vanishes identically as a polynomial in $b$ iff that disjunct holds.

This $\varphi$ is equivalent to the $\exists\forall$ form below:
\begin{align*}
\varphi\iff\exists a\,\forall b\,
\prod_i\Big[(1-a\prod_k u_{ik})+\sum_j t_{ij} b^j\Big]=0
\end{align*}

For the forward direction, suppose the disjunction holds, and suppose
specifically that the $i^{th}$ disjunct holds. We choose
$a=\prod_k u_{ik}^{-1}$. For that $a$ and for any $b$, the $i^{th}$ factor
in the above expression will vanish, so the $\exists\forall$ statement holds.

For the backwards direction, suppose that the $\exists\forall$ statement
holds for some $a$. Then for any $b$, one of the factors must vanish, and by
the pigeonhole principle, one of the factors must vanish for infinitely many
$b$. As a nonzero univariate polynomial has only finitely many roots,
if the $i^{th}$ factor vanishes for infinitely many $b$, then it must
be the zero polynomial in $b$. Therefore $1-a\prod_k u_{ik}=0$, which
implies that each $u_{ik}$ is nonzero; and also for each $j$, $t_{ij}=0$.
This establishes that the $i^{th}$ disjunct holds, and therefore that the
whole disjunction holds.

This proves:
\begin{theorem}\label{EA}
Let $\varphi(x)$ be a finite combination of polynomial equations and
inequations in disjunctive normal form, with $d$ disjuncts
and a total of $e$ equations.
Then there is a polynomial $p(a,b,x)\in\mathbb{C}[a,b,x]$,
of degree $d$ in $a$,
of degree $e$ in $b$,
and such that for all $x\in\mathbb{C}^n$,
$$\varphi(x)\Longleftrightarrow(\exists a\in\C)(\forall b\in\C)\,p(a,b,x)=0.$$
\end{theorem}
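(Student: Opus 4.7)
The plan is to verify the claimed equivalence and the degree bounds for the polynomial
$$p(a,b,x) = \prod_i\bigl[(1 - a\textstyle\prod_k u_{ik}) + \sum_j t_{ij} b^j\bigr]$$
already exhibited in the discussion preceding the theorem.

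For the degree bounds, I would just read them off the formula: each of the $d$ factors is linear in $a$, so the product has $a$-degree $d$; and the $i$-th factor has $b$-degree $e_i$, the number of equations in the $i$-th disjunct, so multiplying gives total $b$-degree $\sum_i e_i = e$.

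For the forward direction, the witnessing is straightforward. If the $i$-th disjunct of $\varphi$ holds at $x$ then all $u_{ik}(x)$ are nonzero, so $a := \prod_k u_{ik}(x)^{-1}$ is well defined; this choice kills the constant-in-$b$ term of the $i$-th factor, while the vanishing $t_{ij}$'s kill the rest. The $i$-th factor becomes the zero polynomial in $b$, and so the whole product vanishes for every $b$.

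The main technical step, and the only place I expect any care to be needed, is the backward direction, which combines pigeonhole with the fundamental theorem of algebra (Lemma 3). Given an $a$ with $p(a,b,x)=0$ for every $b \in \C$, at least one of the $d$ factors $f_i(b)$ must vanish at each such $b$; since $\C$ is infinite, some single factor must vanish on an infinite set of $b$'s and hence be identically zero as a univariate polynomial in $b$. Reading off its coefficients yields both $t_{ij}(x) = 0$ for all $j$ and $a\prod_k u_{ik}(x) = 1$, the latter forcing each $u_{ik}(x)$ to be nonzero. Together these realize the $i$-th disjunct of $\varphi$, closing the equivalence. The pigeonhole step is the one place where the infinitude of the ground field is essential.
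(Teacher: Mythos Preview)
Your proposal is correct and follows the paper's own argument essentially verbatim: the same explicit polynomial, the same witness $a=\prod_k u_{ik}^{-1}$ for the forward direction, and the same pigeonhole-plus-finitely-many-roots argument for the backward direction. Your explicit verification of the degree bounds in $a$ and $b$ is a small addition the paper leaves implicit; the reference to Lemma~3 is a slight misattribution, since all that is actually used is that a nonzero univariate polynomial has finitely many roots.
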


\section{Normalizing to $\AEf$ over $\C$}

We start with $\varphi$ in conjunctive normal form:
$$\varphi:=\bigwedge_i \Big(\bigvee_j t_{ij}=0\;\vee\;
\bigvee_k u_{ik}\neq 0\Big)$$

We use $\C$ having characteristic $0$ to assign a distinct integer parameter
to each conjunct, and apply Lagrange interpolation to construct a polynomial
whose $i^{th}$ term contributes only when $a$ equals $i$.
Then $\varphi$ is equivalent to the $\forall\exists$ form below:

\begin{align*}
\varphi \iff \forall a\,\exists b\,
\Big[1-b\prod_i(a-i)\Big]
\Big[\sum_i\prod_{h\neq i}(a-h)\prod_j t_{ij}\prod_k(1-bu_{ik})\Big]=0.
\end{align*}

For the forward direction, suppose the conjunction holds.
\begin{itemize}
\item If $a$ is distinct from all of the $i$'s, then let
$b=\prod_i(a-i)^{-1}$, so that the first bracketed expression will vanish.
\item If $a$ is equal to one of the $i$'s, and $t_{ij}=0$, then let $b=0$
(or any other value), and the second bracketed expression will vanish.
\item If $a$ is equal to one of the $i$'s, and $u_{ik}\neq 0$, then let
$b=u_{ik}^{-1}$, and the second bracketed expression will vanish.
\end{itemize}
In any case the $\forall\exists$ statement holds.

For the backwards direction, suppose that the $\forall \exists$ statement
holds, and consider $a=i$. Then the big product reduces to
$$\prod_{h\neq i}(i-h)\,\prod_j t_{ij}
\prod_k (1-b u_{ik})$$
and must vanish. So either $\prod_j t_{ij}$ vanishes,
in which case one of the $t_i$'s vanishes;
or $\prod_k (1-b u_{ik})$ vanishes,
in which case one of the $u_{ik}$ is non-zero.
In either case the conjunction holds.

This proves:
\begin{theorem}\label{AE}
Let $\varphi(x)$ be a finite combination of polynomial equations and
inequations in conjunctive normal form, with $d$ conjuncts
and at most $f$ inequalities per conjunct.
Then there is a polynomial $q(a,b,x)\in\mathbb{C}[a,b,x]$,
whose degree in $a$ is $2d-1$,
whose degree in $b$ is at most $f+1$,
and such that for all $x\in\mathbb{C}^n$,
$$\varphi(x)\Longleftrightarrow(\forall a\in\C)(\exists b\in\C)\,q(a,b,x)=0.$$
\end{theorem}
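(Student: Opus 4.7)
The plan is to mimic the existential-universal construction of Theorem 4.1 but dualize it for CNF, using the Lagrange interpolation trick of Lemma 3.2. Since $\varphi$ is a conjunction $\bigwedge_i\bigl(\bigvee_j t_{ij}=0 \vee \bigvee_k u_{ik}\neq 0\bigr)$, the outer $\forall a$ is natural: for every value of $a$ we need to exhibit a witness $b$. The target polynomial will be a product $q = F_1 \cdot F_2$ whose two factors take turns: one handles generic values of $a$, and the other handles the distinguished values $a \in \{1,\ldots,d\}$, one per conjunct.

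For generic $a$, take $F_1 := 1 - b\prod_i(a-i)$, which is zeroed by $b = \prod_i(a-i)^{-1}$ whenever $a \notin \{1,\ldots,d\}$. The real design work is in $F_2$, which must encode the $i$-th conjunct exactly when $a = i$. The Lagrange weight $\prod_{h\neq i}(a-h)$ singles out the $i$-th summand in a sum, so the natural ansatz is
$$F_2 := \sum_i \prod_{h\neq i}(a-h)\,\prod_j t_{ij}\,\prod_k(1-bu_{ik}),$$
where each summand multiplies the Lagrange weight by $\prod_j t_{ij}$ (vanishing iff some $t_{ij}=0$) and by $\prod_k(1-bu_{ik})$ (vanishing for a chosen $b$ iff some $u_{ik}\neq 0$).

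The forward direction then splits on $a$: a generic $a$ kills $F_1$ via the reciprocal choice, while $a = i$ reduces $F_2$ to its $i$-th summand, which the $i$-th conjunct lets us zero by picking $b$ arbitrarily (if some $t_{ij}=0$) or $b = u_{ik}^{-1}$ (if some $u_{ik}\neq 0$). The converse only requires testing $a = 1, \ldots, d$; at each such $a$ the existentially supplied $b$ collapses the problem to the single $i$-th conjunct, and Lemma 3.1's algebraic encoding of disjunction and inequation recovers it. The main conceptual obstacle is guessing the Lagrange-based form of $F_2$ in the first place; once it is written down, verifying the equivalence is direct and the degree bounds follow by inspection: $F_1$ gives $\deg_a = d$ and $\deg_b = 1$, while $F_2$ gives $\deg_a = d-1$ and $\deg_b \le f$, for totals $2d-1$ and $f+1$.
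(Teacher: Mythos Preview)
Your proposal is correct and matches the paper's proof essentially verbatim: the paper constructs exactly the same polynomial $q=\bigl[1-b\prod_i(a-i)\bigr]\bigl[\sum_i\prod_{h\neq i}(a-h)\prod_j t_{ij}\prod_k(1-bu_{ik})\bigr]$, argues the forward direction by the same three-way case split on $a$, and checks the backward direction by evaluating at $a=i$ just as you do. Your degree computation also agrees with the paper's stated bounds.
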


\section{Optimality: the quantifier prefixes cannot be shortened over $\C$}

The constructions above used exactly two quantifier blocks.
This section shows that such alternation is necessary:
no purely universal or purely existential single-equation
representation is possible over $\C$. The argument for the existential
form is motivated by dimension theory, but we present it in an
elementary form that highlights the connection with quasiminimality.

\begin{lemma}
For any polynomial $p(a,y)$, the set $S=\{y:(\forall a)\,p(a,y)=0\}$ is closed.
\end{lemma}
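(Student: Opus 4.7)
The plan is to reduce the claim to the fundamental theorem of algebra (already recorded in Lemma 3) by regarding $p$ as a polynomial in $a$ with coefficients that are polynomials in $y$. Concretely, I would first write
\[
p(a,y) \;=\; \sum_{i=0}^{d} c_i(y)\,a^i,
\]
where $d$ is the degree of $p$ in $a$ and each $c_i(y)\in\mathbb{C}[y]$. For any fixed $y\in\mathbb{C}$, the polynomial $p(\cdot,y)\in\mathbb{C}[a]$ vanishes for all $a$ if and only if it is identically zero as a univariate polynomial, which, by Lemma 3 (the $\forall z$ clause applied to $y$ in place of $z$ and then to $a$), occurs if and only if every coefficient $c_i(y)$ equals $0$.

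From this observation the description of $S$ becomes
\[
S \;=\; \bigcap_{i=0}^{d} \{y\in\mathbb{C} : c_i(y)=0\}.
\]
Each set in the intersection is the zero locus of a single polynomial in $y$, hence Zariski closed (and \emph{a fortiori} closed in the Euclidean topology). A finite intersection of closed sets is closed, so $S$ is closed in whichever of the two senses is wanted for the subsequent optimality argument.

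There is no serious obstacle: the only step that uses anything nontrivial is the passage from ``$p(\cdot,y)$ vanishes identically'' to ``all coefficients vanish,'' and this is exactly the content of the fundamental theorem of algebra over $\mathbb{C}$ that the paper has already isolated. The mild subtlety worth flagging is that the coefficients $c_i(y)$ are themselves polynomials rather than constants, so one should be careful to apply the univariate fact pointwise in $y$ rather than globally in $a$ and $y$ simultaneously; this is precisely what the displayed intersection formula records.
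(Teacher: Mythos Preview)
Your argument is correct and is exactly the natural one: expand $p$ in powers of $a$, observe that the universal condition forces every coefficient $c_i(y)$ to vanish, and conclude that $S$ is a finite intersection of zero sets of univariate polynomials, hence closed. The paper itself offers no proof of this lemma at all---it is stated and immediately followed by its corollary---so there is nothing to compare against; your write-up simply fills in what the author left implicit.

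One small remark on attribution: the step ``$p(\cdot,y)$ vanishes for all $a$ iff every $c_i(y)=0$'' does not actually require the fundamental theorem of algebra or Lemma~3.3. It is the more elementary fact that a nonzero univariate polynomial over an infinite field has only finitely many roots, and this is the fact the paper invokes elsewhere (e.g.\ in the $\exists\forall$ construction). Lemma~3.3 as stated concerns $(\forall z)\,q(z)\neq 0$, not $(\forall z)\,q(z)=0$, so citing it here is a slight mismatch. This does not affect correctness, only the cleanliness of the reference.
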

\begin{corollary}
$y\neq 0$ is not equivalent to any formula of the form
$(\forall a)\,p(a,y)=0$.
\end{corollary}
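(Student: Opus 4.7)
The plan is to apply the preceding lemma directly: it guarantees that any set $S=\{y:(\forall a)\,p(a,y)=0\}$ is closed, whereas $\C\setminus\{0\}$ is not closed, so the two sets cannot coincide. Thus if $y\neq 0$ were equivalent to $(\forall a)\,p(a,y)=0$ for some $p$, that equivalence would force $\C\setminus\{0\}$ to be closed, a contradiction.

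To keep the argument concrete and independent of any particular topology, I would unpack the dichotomy underlying the lemma. Writing $p(a,y)=\sum_{j=0}^d c_j(y)\,a^j$ with $c_j\in\C[y]$, the universal half of the fundamental theorem of algebra (the third lemma of Section~3, applied pointwise in $y$) yields
$$(\forall a\in\C)\,p(a,y)=0 \iff c_0(y)=c_1(y)=\cdots=c_d(y)=0.$$
Hence $S$ is the simultaneous zero locus in $\C$ of finitely many univariate polynomials $c_0,\ldots,c_d$. Either every $c_j$ is identically zero, in which case $S=\C$, or at least one $c_j$ is a nonzero univariate polynomial with only finitely many roots, in which case $S$ is finite.

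The conclusion then follows immediately, since $\C\setminus\{0\}$ is neither all of $\C$ nor a finite set. There is no substantive obstacle; the corollary is essentially a one-line consequence of the lemma. The only point worth being careful about is the passage from ``$(\forall a)\,p(a,y)=0$'' to ``each coefficient $c_j(y)=0$'', which is exactly where the infiniteness (and algebraic closure) of $\C$ enters the argument and where the analogue over a finite field would fail.
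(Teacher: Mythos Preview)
Your proposal is correct and matches the paper's intent: the corollary is stated without proof as an immediate consequence of the lemma that $S=\{y:(\forall a)\,p(a,y)=0\}$ is closed, and you invoke exactly that, observing $\C\setminus\{0\}$ is not closed. Your further unpacking into the dichotomy ``$S=\C$ or $S$ is finite'' is more than the paper provides but is a helpful and accurate elaboration.
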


\begin{theorem}
For any polynomial $p(a,y,z)$, the set
$$S=\{(y,z): \exists a,\,p(a,y,z)=0\}$$
is either empty or uncountable.
\end{theorem}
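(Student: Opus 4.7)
The plan is to write $p$ as a polynomial in $a$ with coefficients in $\C[y,z]$, so $p(a,y,z) = \sum_{i=0}^d c_i(y,z)\,a^i$, and to split into cases based on the degree of $p$ in $a$. If $p \equiv 0$ then $S = \C^2$. If $\deg_a p = 0$, so $p = c_0(y,z)$, then $S = V(c_0) \subseteq \C^2$: if $c_0$ is a nonzero constant then $S = \emptyset$; otherwise $c_0$ is nonconstant and $S$ is a nonempty plane algebraic curve. If $\deg_a p = d \geq 1$ with leading coefficient $c_d(y,z) \not\equiv 0$, then for every $(y,z)$ with $c_d(y,z) \neq 0$ the univariate polynomial $p(a,y,z)$ has degree exactly $d$ and thus a root in $\C$ by the fundamental theorem of algebra (Lemma 3), so the Zariski-open set $\{c_d \neq 0\}$ is contained in $S$.

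To finish, one verifies the geometric ingredient: every nonempty plane curve $V(f) \subseteq \C^2$ cut out by a nonconstant $f \in \C[y,z]$ is uncountable, and so is the complement of any such curve. Both follow from a one-variable slice argument. If $f$ depends genuinely on $z$, its leading coefficient as a polynomial in $z$ is some nonzero $g(y) \in \C[y]$, vanishing on at most finitely many $y_0$; for each of the uncountably many $y_0$ with $g(y_0) \neq 0$, the polynomial $f(y_0, z)$ is nonconstant in $z$ with at least one root and all but finitely many non-roots, so $V(f)$ and its complement each inherit uncountably many points. If $f$ depends only on $y$, then $V(f)$ is a finite union of vertical lines and its complement is an uncountable union of vertical lines.

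The main obstacle is not any single step but organizing the case split and geometric verification cleanly; no machinery beyond Lemma 3 and elementary facts about polynomials in two variables is needed. The argument may be viewed as a one-step instance of Chevalley's theorem on constructible images, specialized to the projection $\C^3 \to \C^2$ cut out by one polynomial, and the resulting dichotomy rules out purely existential single-equation representations for any nonempty countable set, for example the single point $\{(0,0)\}$.
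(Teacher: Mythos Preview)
Your proof is correct but takes a different route from the paper's. You argue directly by casing on $\deg_a p$: if $d\ge 1$ then the nonempty Zariski-open set $\{c_d\neq 0\}$ lies in $S$, while if $d=0$ then $S$ is either empty, all of $\C^2$, or a nonempty plane curve, and you verify the needed uncountability of curves and their complements by one-variable slicing. The paper instead argues by contraposition: assuming $S$ countable, each higher $a$-coefficient $p_i$ ($i\ge 1$) must vanish on the dense complement of $S$ and hence identically, reducing $p$ to $p_0(y,z)$; the same density trick is then repeated in $z$ to reduce further to $q_0(y)$, forcing $S=S_y\times\C$ and hence $S=\emptyset$. Your approach is more geometric and front-loads the case analysis; the paper's is more uniform and iterates cleanly variable by variable, which is exactly what it exploits afterward to extend the result to existential blocks $\exists\,\mathbf{a}\in\C^m$. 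Both rest on the same elementary fact---a polynomial vanishing on a co-countable (hence dense) set vanishes identically---just deployed in opposite directions.
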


\begin{proof}
Suppose $S$ is countable.
We will use the quantifier elimination of $\exists$ over $\C$.

Write $$p(a,y,z)=\sum_i p_i(y,z) a^i.$$
For any $(y_0,z_0)\notin S$, there is no root for $p(a,y_0,z_0)$,
so for each $i\ge 1$, $p_i$ vanishes outside of $S$. Since $S$
is countable, its complement is dense, and
by continuity, $p_i$ vanishes on $S$ also. Therefore
$p(a,y,z)=p_0(y,z)$.
Let $q=p_0$, and let $S_y=\{y:\exists z,\,(y,z)\in S\}$.

Write $$q(y,z)=\sum_i q_i(y) z^i.$$
For any $y_0\notin S_y$, there is no root for $q(y_0,z)$,
so for each $i\ge 1$, $q_i$ vanishes outside $S_y$. Since
$S_y$ is countable, its complement is dense, and
by continuity, $q_i$ vanishes on $S_y$ also. Therefore
$q(y,z)=q_0(y)$.

Therefore $S=\{(y,z):q(y,z)=0\}=\{(y,z):q_0(y)=0\}=S_y \times \C$.
This can only be countable if it is empty.
\end{proof}
 
\begin{corollary}
$y=0 \wedge z=0$ is not equivalent to any formula of the form
$(\exists a)\,p(a,y,z)=0$, since $\{(y,z):y=0 \wedge z=0\}$
is neither empty nor uncountable.
\end{corollary}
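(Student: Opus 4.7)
The plan is to apply the preceding theorem in its contrapositive form. That theorem establishes a dichotomy: every set of the form $\{(y,z) : \exists a,\, p(a,y,z)=0\}$ is either empty or uncountable. So it suffices to exhibit the defining set of $y=0 \wedge z=0$ and verify that it falls into neither category.

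First I would identify the defining set of the formula as $\{(0,0)\} \subset \C^2$, a single point. Since a one-element set is nonempty yet finite (in particular countable), it fails both clauses of the dichotomy. Therefore no polynomial $p(a,y,z)$ can produce this set as an existential projection in a single variable $a$, which is precisely the statement of the corollary.

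There is essentially no obstacle to overcome; the corollary is a one-line cardinality check against the dichotomy already established. The interest of recording it is conceptual: together with the earlier corollary ruling out a purely universal representation of $y \neq 0$, it shows that the alternation in Theorems \ref{EA} and \ref{AE} cannot be collapsed to a single quantifier block over $\C$, even for extremely simple Boolean combinations such as a pair of equations. Any strengthening of the result (for instance, replacing ``not uncountable'' with a stronger finiteness or algebraic-dimension condition on $S$) would improve the quantitative statement but is not needed for this particular corollary.
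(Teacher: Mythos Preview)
Your proposal is correct and matches the paper's own reasoning exactly: the corollary is an immediate application of the preceding dichotomy theorem to the singleton $\{(0,0)\}$, which is nonempty but countable. The additional contextual remarks you include are accurate and in the spirit of the surrounding section, but the proof content itself is the same one-line cardinality check the paper gives.
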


The impossibility results above are stated for quantification over a single
scalar variable, but they extend to blocks of universally or
existentially quantified variables. Even in higher dimensions,
$S=\{y: (\forall\ab\in\C^m)p(\ab,y)=0\}$ is closed. Similarly, if
$$S=\{(y,z): (\exists\ab\in\C^m)p(\ab,y,z)=0\}$$
then the same argument as above shows that $S$ is either empty or
uncountable, by carrying out the argument in each variable one at a time.
Consequently, regardless of arity, no normal form with
purely existential or purely universal quantification of a single equation
can represent all Boolean combinations of polynomial equations over $\C$.

\section{Normalizing to $\E$ over formally real fields}

A simpler existential form, with only one new quantifier, is available for
any other formally real field, so long as we retain the restriction to formulas
with equations and inequations (but not order). This $\exists$ normal form
holds for $\mathbb{R}$, $\mathbb{Q}$, and $\mathbb{R}^\text{alg}$.

This contrasts with the greater generality of theorems 4.1, 5.1, and 6.1,
which work not only over $\mathbb{C}$ but also over any infinite field of
characteristic $0$. The key properties used there were the invertibility of
elements, the infinitude of the field, and the finitude of roots for any
nonzero polynomial. Here, by contrast, we use the defining property of a
formally real field, namely that a sum of squares is zero iff all the squares
are zero.

We switch notation for quantified variables from $a,b$ to $r,s$ to reflect
the change from $\C$ to a field like $\R$.

\begin{theorem}\label{EforReq}
Let $\varphi(x)$ be a finite combination of polynomial equations and
inequations in disjunctive normal form, with $d$ disjuncts.
Then there is a polynomial $p(r,x)\in\R[r,x]$,
of degree $2d$ in $r$,
and such that for all $x\in\mathbb{R}^n$,
$$\varphi(x)\Longleftrightarrow(\exists r\in\R)\,p(r,x)=0.$$
\end{theorem}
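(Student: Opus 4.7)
The plan is to reduce to the $\exists\forall$ normal form of Theorem \ref{EA} and then eliminate the universal block by exploiting two facts special to $\R$: an infinite field detects the zero polynomial, and a formally real field collapses conjunctions of equations via a sum of squares.

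First I would apply Theorem \ref{EA} to the same disjunctive normal form, yielding a polynomial $p(a,b,x)\in\R[a,b,x]$ of degree $d$ in $a$ (and some finite degree $E$ in $b$) such that
$$\varphi(x)\iff(\exists a\in\R)(\forall b\in\R)\,p(a,b,x)=0.$$
The construction of Theorem \ref{EA} is purely arithmetic, so no complex coefficients appear and $p$ genuinely lies in $\R[a,b,x]$. I would then expand $p$ as a polynomial in $b$, writing $p(a,b,x)=\sum_{\ell=0}^E c_\ell(a,x)\,b^\ell$, where each coefficient $c_\ell(a,x)\in\R[a,x]$ inherits degree at most $d$ in $a$.

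Next, because $\R$ is infinite, $(\forall b\in\R)\,p(a,b,x)=0$ is equivalent to $p$ being the zero polynomial in $b$, i.e.\ to the conjunction $\bigwedge_{\ell=0}^{E} c_\ell(a,x)=0$. Using the formally real identity $\bigwedge_\ell c_\ell=0\iff\sum_\ell c_\ell^2=0$ from Section 3, I would set
$$p'(r,x):=\sum_{\ell=0}^{E} c_\ell(r,x)^2\in\R[r,x],$$
so that $\varphi(x)\iff(\exists r\in\R)\,p'(r,x)=0$. The degree count is then immediate: each $c_\ell$ has $r$-degree at most $d$, so $p'$ has $r$-degree at most $2d$, matching the bound claimed in the theorem.

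There is no substantive obstacle here; the construction is essentially a bookkeeping exercise once one notices that the sum-of-squares encoding lets the universal $b$ of the $\exists\forall$ form be absorbed completely over $\R$. The only point that warrants a sanity check is that the $a$-degree really is preserved under extraction of $b$-coefficients and then exactly doubled by squaring, and that no cross terms inflate it further, which is clear from the sum-of-squares (rather than product) form.
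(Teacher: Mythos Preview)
Your argument is correct, but it takes a genuinely different route from the paper's. The paper gives a direct one-line construction,
\[
p(r,x)=\prod_i\Big[\sum_j t_{ij}^2 + \bigl(1-r\prod_k u_{ik}\bigr)^2\Big],
\]
encoding each disjunct as a single factor via the sum-of-squares identity and then taking the product over disjuncts; the forward and backward directions are then checked by hand. You instead factor through Theorem~\ref{EA}, extract the $b$-coefficients $c_\ell$, and collapse the universal block with $\sum_\ell c_\ell^2$. Both constructions land at $r$-degree $2d$, but they produce different polynomials: the paper's is a product of $d$ factors, while yours is a sum of $E+1$ squares (where $E$ is the $b$-degree from Theorem~\ref{EA}). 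The paper's version is shorter and keeps the disjunct structure visible; your version has the conceptual merit of exhibiting Theorem~\ref{EforReq} as a formal corollary of Theorem~\ref{EA} plus the formally-real identity, making the relationship between the two normal forms explicit. One small gap: you argue only $r$-degree \emph{at most} $2d$, whereas the statement claims exactly $2d$. In your setup this follows because $c_0(r,x)=\prod_i\bigl(1-r\prod_k u_{ik}\bigr)$ has $r$-degree exactly $d$, while every $c_\ell$ with $\ell\ge 1$ has $r$-degree at most $d-1$ (any positive power of $b$ forces at least one factor to contribute no $a$), so the $r^{2d}$ coefficient of $\sum_\ell c_\ell^2$ is $\bigl(\prod_i\prod_k u_{ik}\bigr)^2\neq 0$.
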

\begin{proof}
We start with $\varphi$ in disjunctive normal form:
\begin{align*}
\varphi
&:=\bigvee_i\Big(\bigwedge_j t_{ij}= 0\;\wedge\;\bigwedge_k u_{ik}\neq 0\Big)
\end{align*}
This $\varphi$ is equivalent to the $\exists$ form below:
\begin{align*}
\varphi\iff\exists r
\prod_i\Big[\sum_j t_{ij}^2 + (1-r\prod_k u_{ik})^2\Big]=0
\end{align*}

For the forward direction, suppose the disjunction holds, and specifically
that the $i^{th}$ disjunct holds. Then let $r=\prod_k u_{ik}^{-1}$, and
$\sum_j t_{ij}^2 + (1-r\prod_k u_{ik})^2=0$,
so the existential statement holds.

For the backwards direction, suppose the existential statement holds with
some $r$. Then for some $i$, $\sum_j t_{ij}^2 + (1-r\prod_k u_{ik})^2=0$,
which means that each of the $t_{ij}$ vanishes and also $r\prod_k u_{ik}=1$,
so each of the $u_{ik}$ is non-zero. Therefore the $i^{th}$ disjunct holds
and the disjunction holds.
\end{proof}

Again with our initial example, now interpreted with $y,z\in\R$,
$$(y=0 \wedge z\neq 0)\vee(z=0\wedge y\neq 0)
\iff
\exists r\, \Big[y^2+(1-rz)^2\Big]\Big[z^2+(1-ry)^2\Big]=0.$$

\section{Normalizing to $\Ed$ and $\AEf$ over $\R$}

For inequalities in $\R$, it is typical to also consider order conditions
such as $u>0$. Indeed Boolean combinations polynomial equations and
such inequalities over $\R$ can be converted to both $\exists^d$ and
$\forall\exists$ forms. We state theorems here analogous to the main
theorem over $\C$, using the algebraic encoding of order from Lemma 3.1.
We omit detailed proofs, since the constructions and proofs are otherwise
parallel to the results over $\C$, and we will later cite other results
establishing a single-quantifier $\exists$ form for these formulas.

\begin{theorem}\label{EdforR}
Let $\varphi(x)$ be a finite combination of polynomial equalities and
order-inequalities in conjunctive normal form, with $d$ conjuncts.
Then there is a polynomial $p(r_1,\dots,r_d,x)\in\mathbb{R}[r_1,\dots,r_d,x]$,
of degree 4 in each $r_i$,
and such that, for all $x\in\mathbb{R}^n$,
$$\varphi(x)\Longleftrightarrow(\exists r_1,\dots,r_d\in\R)\,p(r_1,\dots,r_d,x)=0.$$
\end{theorem}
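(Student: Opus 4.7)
The strategy is to adapt the disjunction-via-product trick of Theorem~\ref{EA} to the real setting, replacing the encoding $u \neq 0 \iff (\exists b)\,bu=1$ by the order-inequality encoding $u>0 \iff (\exists s)\,s^2 u=1$ from Lemma~3.1, and then combining the $d$ resulting conjuncts into a single equation via the real sum-of-squares identity $\bigwedge_i a_i=0 \iff \sum_i a_i^2=0$. The separation of quantifiers across conjuncts is what produces $d$ existentially quantified variables rather than a single one.

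Writing $\varphi$ in conjunctive normal form as $\bigwedge_{i=1}^d C_i$ with $C_i = \bigvee_j (t_{ij}=0) \vee \bigvee_k (u_{ik}>0)$, for each conjunct I would take
$$P_i(r_i,x) = \Big(\prod_j t_{ij}\Big)\prod_k \bigl(r_i^2 u_{ik}-1\bigr),$$
and verify $C_i(x) \iff (\exists r_i\in\R)\,P_i(r_i,x)=0$ by the usual argument: an equality disjunct kills the first factor for any $r_i$, an inequality disjunct $u_{ik}>0$ permits the choice $r_i=u_{ik}^{-1/2}$ to kill the $k$th inequality factor, and conversely any zero of $P_i$ forces either $\prod_j t_{ij}=0$ or $r_i^2 u_{ik}=1$, the latter implying $u_{ik}>0$. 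I would then set
$$p(r_1,\ldots,r_d,x) = \sum_{i=1}^d P_i(r_i,x)^2,$$
and observe that because each $P_i$ depends only on $r_i$, the existentials commute freely through the conjunction: $(\exists r_1,\ldots,r_d)\,p(\vec r,x)=0 \iff \bigwedge_i (\exists r_i)\,P_i(r_i,x)=0 \iff \bigwedge_i C_i(x) \iff \varphi(x)$.

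The main obstacle is pinning down the stated degree bound of $4$ in each $r_i$. With the construction above $P_i$ has degree $2f_i$ in $r_i$, where $f_i$ counts the order-inequality disjuncts in the $i$th conjunct, so $p$ has degree $4f_i$ in $r_i$. The bound of $4$ is tight precisely when each conjunct contains at most one order-inequality disjunct (equalities contribute nothing to the $r_i$-degree), which is the standard normalization one obtains by treating the equality factor $\prod_j t_{ij}$ as a single block. For general CNF one would either pre-normalize to this form (absorbing extra inequalities by increasing $d$) or rephrase the bound as $4\max_i f_i$; the paper's decision to omit the detailed proof in favor of the sharper $\exists$-form cited in Section~9 suggests that tightening this degree bound is deferred to that stronger result.
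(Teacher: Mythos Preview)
Your construction is essentially identical to the paper's proof sketch, which also takes $P_i=\prod_j t_{ij}\prod_k(1-r_i^2 u_{ik})$ and then sets $p=\sum_i P_i^2$. Your observation about the degree bound---that the stated ``degree $4$ in each $r_i$'' holds only when each conjunct contains at most one order-inequality, and is otherwise $4\max_i f_i$---applies equally to the paper's own construction, which does not address this discrepancy.
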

\emph{Proof Sketch}:
\begin{align*}
\varphi\ :=\  
&\bigwedge_i (\bigvee_j t_{ij}=0 \vee \bigvee_k u_{ik}>0) \\
\iff &\bigwedge_i \exists r\,\prod_j t_{ij}\prod_k(1-r^2 u_{ik})=0 
\phantom{(r-h)\prod_j t_{ij}\prod_k(1-s^2u_{ik})\Big]=0.}
\\
\iff &\exists r_1,\dots,r_d\;\sum_i\Big[\prod_j t_{ij}\prod_k(1-r_i^2u_{ik})\Big]^2=0.
\end{align*}
\pagebreak
\begin{theorem}\label{AEforR}
Let $\varphi(x)$ be a finite combination of polynomial equalities and
order-inequalities in conjunctive normal form, with $d$ conjuncts
and at most $f$ inequalities per conjunct.
Then there is a polynomial $q(r,s,x)\in\mathbb{R}[r,s,x]$,
whose degree in $r$ is $2d-1$,
whose degree in $s$ is at most $2f+1$,
and such that for all $x\in\mathbb{R}^n$,
$$\varphi(x)\Longleftrightarrow(\forall r\in\R)(\exists s\in\R)\,q(r,s,x)=0.$$
\end{theorem}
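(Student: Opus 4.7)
The plan is to mirror the $\forall\exists$ construction of Theorem~\ref{AE}, substituting the real identity $u>0 \iff \exists s\, s^2u=1$ from Lemma~3.1 in place of the $\C$-identity $u\neq 0 \iff \exists b\, bu=1$. Writing $\varphi$ in conjunctive normal form as $\bigwedge_i\bigl(\bigvee_j t_{ij}=0 \vee \bigvee_k u_{ik}>0\bigr)$, my candidate polynomial would be
$$q(r,s,x) := \Bigl[1-s\prod_i(r-i)\Bigr]\Bigl[\sum_i \prod_{h\neq i}(r-h)\prod_j t_{ij}\prod_k(1-s^2 u_{ik})\Bigr].$$
Each replacement of $(1-bu_{ik})$ by $(1-s^2 u_{ik})$ doubles the degree contribution in the quantified variable, giving $2f$ from the inner product plus $1$ from the outer bracket, which is exactly the stated $2f+1$ bound in $s$. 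The degree computation in $r$ is unchanged from Theorem~\ref{AE}: degree $d$ in the outer bracket times degree $d-1$ in the Lagrange prefactor $\prod_{h\neq i}(r-h)$ gives $2d-1$.

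For the forward direction, I would follow the case split from Theorem~\ref{AE}. If $r$ is distinct from every integer in $\{1,\ldots,d\}$, choose $s=1/\prod_i(r-i)$ to kill the first bracket. If $r=i$, Lagrange interpolation collapses the second bracket to the single $i$-th summand, and the truth of the $i$-th conjunct lets me pick $s$ to make that summand vanish: take $s=0$ if some $t_{ij}=0$, or $s=1/\sqrt{u_{ik}}$ (a real number, since $u_{ik}>0$) if some $u_{ik}>0$.

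For the backward direction I would specialize to $r=i$ for each $i\in\{1,\ldots,d\}$. The first bracket then evaluates to $1$, so the second must vanish, and the Lagrange prefactor kills all summands but the $i$-th, leaving $\prod_{h\neq i}(i-h)\prod_j t_{ij}\prod_k(1-s^2 u_{ik}) = 0$. Hence some $t_{ij}=0$ or some $s^2 u_{ik}=1$, and in the latter case $u_{ik}=1/s^2>0$, so the $i$-th conjunct holds. The one subtlety, compared to the $\C$ proof, is tracking this reality constraint: $s^2 u_{ik}=1$ admits a real $s$ precisely when $u_{ik}>0$, which is exactly the equivalence we need in both directions. No pigeonhole or fundamental-theorem-of-algebra argument is needed, since evaluating $r$ at the finite set $\{1,\ldots,d\}$ already isolates each conjunct.
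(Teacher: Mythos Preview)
Your proposal is correct and follows exactly the paper's approach: the paper's proof sketch exhibits precisely the polynomial you wrote down, obtained from the $\forall\exists$ construction of Theorem~\ref{AE} by replacing each $(1-bu_{ik})$ with $(1-s^2u_{ik})$. Your filled-in forward and backward arguments and degree counts are the natural elaboration of that sketch along the lines of the proof of Theorem~\ref{AE}.
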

\emph{Proof Sketch}:
\begin{align*}
\varphi\ :=\
&\bigwedge_i (\bigvee_j t_{ij}=0 \vee \bigvee_k u_{ik}>0) \\
\iff &\bigwedge_i \exists s\,\prod_j t_{ij}\prod_k(1-s^2 u_{ik})=0 \\
\iff &\forall r\,\exists s\,
\Big[1-s\prod_i(r-i)\Big]
\Big[\sum_i\prod_{h\neq i}(r-h)\prod_j t_{ij}\prod_k(1-s^2u_{ik})\Big]=0.
\end{align*}

As an ordered variant of our initial example, if
$$\varphi := (y=0 \vee z=0) \wedge (y>0 \vee z>0)$$
then these constructions yield
$$\varphi \iff (\exists r)(\exists s)\Big[yz\Big]^2 +
\Big[(1-s^2y)(1-r^2z)\Big]^2=0$$
and
$$\varphi \iff (\forall r)(\exists s)\Big[1-s(r-1)(r-2)\Big]
\Big[(r-2)yz+(r-1)(1-s^2y)(1-s^2z)\Big]=0.$$

Since there was no way to express inequations such as $u\neq 0$
in a $\forall$ form over $\R$, there is also no way to express inequalities
in a $\forall$ form over $\R$.

\section{Single-quantifier $\E$ forms over $\R$}

The previous sections gave explicit $\exists^d$ and $\forall\exists$ forms
for Boolean combinations of inequalities over $\R$. David Marker
called our attention to a result of Motzkin showing that in fact a single
existential quantifier always suffices. This result was later sharpened by
Pecker, who achieved the single quantifier via polynomials with degrees
larger than the polynomials above, but much smaller than Motzkin's.
We may write his result in our terms as an $\exists$ normal form:
\begin{theorem}\label{EforR}
Let $\varphi(x)$ be a finite combination of polynomial equalities and
$f$ strict inequalities in conjunctive normal form.
Then there is a polynomial $p(r,x)\in\mathbb{R}[r,x]$,
of degree $2^f$ in $r$,
such that, for all $x\in\mathbb{R}^n$,
$$\varphi(x)\Longleftrightarrow(\exists r\in\R)\,p(r,x)=0.\quad$$
\end{theorem}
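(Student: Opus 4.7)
The plan is to proceed by induction on $f$, the number of strict inequalities, absorbing one inequality at a time into the $\exists$ single-equation form with a doubling of degree in $r$ at each step.

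For the base case $f=0$, $\varphi(x)$ is a Boolean combination of polynomial equalities alone. Putting it in conjunctive normal form $\bigwedge_i\bigvee_j t_{ij}=0$ and applying the identities from Lemma 3.1 ($\bigvee_j t_{ij}=0\iff\prod_j t_{ij}=0$ and $\bigwedge_i p_i=0\iff\sum_i p_i^2=0$) collapses $\varphi$ to a single polynomial equation $p(x)=0$, which trivially admits the form $\exists r\in\R,\,p(x)=0$ with $\deg_r=0\le 2^0$.

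For the inductive step, assume $\psi(x)\iff\exists r\in\R,\,p(r,x)=0$ with $\deg_r p=2^{f-1}$, and suppose we wish to absorb one further strict inequality $u(x)>0$ to produce an $\exists$-form for $\psi(x)\wedge u(x)>0$. The key identity I would exploit comes from the formal substitution $r\mapsto t\sqrt{u(x)}$: separating $p(t\sqrt u,x)$ according to the parity of the powers of $r$ gives $p(t\sqrt u,x)=A(t,x)+\sqrt{u(x)}\,B(t,x)$ with $A,B\in\R[t,x]$, and hence
$$p(t\sqrt u,x)\cdot p(-t\sqrt u,x)=A(t,x)^2-u(x)\,B(t,x)^2\in\R[t,x],$$
a polynomial of degree $2\cdot 2^{f-1}=2^f$ in $t$. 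When $u(x)>0$ the map $t\mapsto\pm t\sqrt{u(x)}$ is a real bijection $\R\to\R$, so real solutions $t$ of $A^2-uB^2=0$ correspond to real solutions $r$ of $p(r,x)=0$; the identity correctly encodes $\psi(x)\wedge u(x)>0$ on $\{u>0\}$. The disjunctive case $\psi(x)\vee u(x)>0$ is easier and is handled by the product polynomial $p(r,x)\cdot(r^2 u(x)-1)$ instead.

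The main obstacle will be eliminating the spurious real zeros on $\{u\le 0\}$. At $u(x)=0$ the construction collapses to $p(0,x)^2$, which vanishes whenever $0$ is a root of $p(\cdot,x)$; at $u(x)<0$, $A^2-uB^2=A^2+|u|B^2$ is a sum of squares vanishing on the common zero locus of $A$ and $B$, which can be nonempty on special fibers. Ruling these out while keeping the degree exactly $2^f$ rather than doubling again is the delicate part of Pecker's sharpening of Motzkin's original result. A complete proof will either combine the above polynomial with a sign-enforcing factor (such as one built from $r^2u(x)-1$) that reuses the existing variable $r$ without inflating the $r$-degree, or it will restructure the induction to absorb each strict inequality jointly with the case split $u>0$ versus $\neg(u>0)$ in the CNF. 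Managing this bookkeeping to achieve the tight bound $2^f$ is the technical heart of the argument.
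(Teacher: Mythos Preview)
The paper does not supply its own proof of this theorem: it attributes the existence of an $\exists$ form to Motzkin and the $2^f$ degree bound to Pecker, states the result, and then only illustrates it with the worked example $(y>0\wedge z>0)\iff(\exists r)\,(r^2yz-y^2z-1)^2-y^2z=0$. So there is no in-paper argument to compare your attempt against line by line.

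That said, your proposal is explicitly incomplete, and the gap you flag is genuine rather than merely cosmetic. Your bare resultant $A^2-uB^2$ really does acquire spurious zeros on $\{u<0\}$: already in the simplest instance $p(r)=r^2y-1$ (encoding $y>0$), your substitution $r\mapsto t\sqrt z$ produces $A=t^2yz-1$, $B=0$, hence $A^2-zB^2=(t^2yz-1)^2$, which vanishes whenever $yz>0$ --- in particular for $y<0$, $z<0$. Pecker's polynomial $(r^2yz-y^2z-1)^2-y^2z$ quoted in the paper avoids exactly this failure, but it is \emph{not} what your recipe outputs when applied to $r^2y-1$; an additional idea is required, and your final paragraph concedes you have not supplied it. The inductive scaffolding and the $r\mapsto t\sqrt u$ heuristic are in the spirit of the Motzkin--Pecker approach, but as it stands this is an outline of where the difficulty lies rather than a proof, and filling the hole is precisely the content of the cited reference.
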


As an example, consider the polynomial produced in Pecker's first corollary:
$$(y>0 \wedge z>0) \iff (\exists r)(r^2yz-y^2z-1)^2-y^2z=0$$
We may simplify this to 
$$(y>0 \wedge z>0) \iff (\exists r)(r^2yz-z-1)^2=z$$
Solving the simplified equation for $y$ gives
$y=(z+1\pm\sqrt{z})/r^2z$,
which makes it easier to see that it has
roots exactly when $y$ and $z$ are positive.

As $r$ takes all possible values, graphs like the below sweep
through the entire first quadrant.
\pagebreak
\begin{figure}[h]
    \centering
    \includegraphics[width=0.4\textwidth]{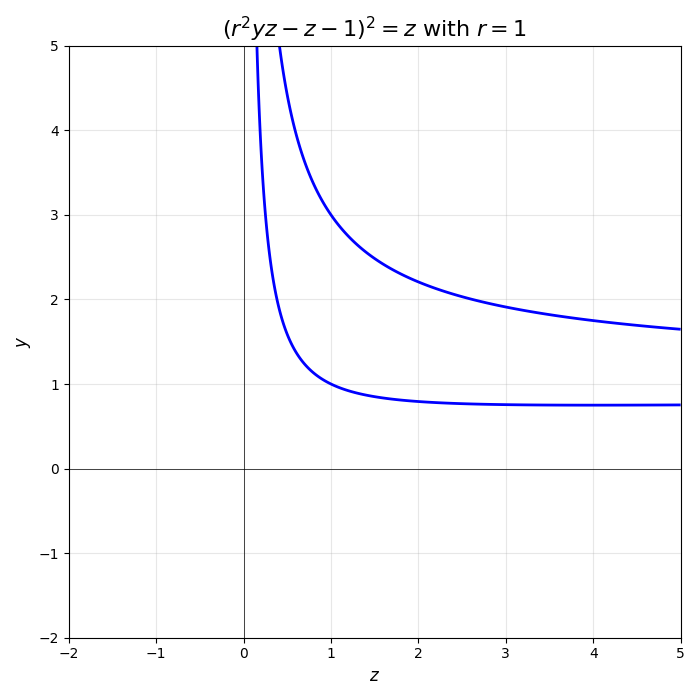}
    \caption{Graph for a simple version of Pecker's polynomial}
    \label{fig:graph}
\end{figure}

A contrast between Pecker's results and ours is that our results, by using
two new variables, require only one use of each term $t_{ij}$ or $u_{ik}$,
and have linearly growing degrees. Pecker's results, using only one new
variable, require multiple uses of those terms, and have exponentially
growing degrees.

\section{Normal forms over $\Q$}
 
The results over $\R$ hold with some modifications over $\Q$, since not
every positive rational is a square. Analogous results obtain from replacing
the squares in $\R$ with sums of four squares in $\Q$. Thus Pecker's form
yields an $\exists^4$ normal form for $\Q$, and the forms provided explicitly
above yield $\exists^{4d}$ and $\forall\exists^4$ forms over $\Q$. 

We can also reduce the $4$'s to $3$'s in these signatures, by using a fact
from number theory and a corollary reformulation. We use $v,w$ for
quantified variables to reflect the switch from $\C$ and $\R$ to $\Q$:
\begin{lemma}
For any natural number $n$, either $n$ or $2n$ is the sum of three squares.
\end{lemma}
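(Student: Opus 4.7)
The plan is to invoke Legendre's three-square theorem, which states that a nonnegative integer $m$ fails to be a sum of three squares if and only if $m = 4^{a}(8b+7)$ for some nonnegative integers $a, b$. Given this classification, the lemma reduces to a simple parity argument on $2$-adic valuations.

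First I would split into cases according to whether $n$ itself is expressible as a sum of three squares. In the first case, $n$ already has the desired representation and we are done. In the second case, Legendre's theorem tells us that $n = 4^{a}(8b+7)$ for some $a, b \geq 0$, so the $2$-adic valuation of $n$ equals $2a$, which is even.

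Next I would compute the $2$-adic valuation of $2n$. Since $2n = 2^{2a+1}(8b+7)$ and $8b+7$ is odd, the $2$-adic valuation of $2n$ is $2a+1$, which is \emph{odd}. Now if $2n$ were of the excluded form $4^{c}(8d+7)$, its $2$-adic valuation would be $2c$, necessarily even. This contradicts the parity just computed, so $2n$ is not of the excluded form, and by Legendre's theorem it is a sum of three squares.

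There is no real obstacle here: the argument is essentially a one-line parity observation once Legendre's three-square theorem is cited. The only subtlety is ensuring the cases cover $n = 0$ and small $n$ correctly, but $0 = 0^2+0^2+0^2$ and direct checks for $n \in \{1, \dots, 7\}$ handle the base cases trivially, so the general argument is uniform.
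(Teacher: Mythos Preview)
Your proof is correct and follows essentially the same route as the paper: both invoke Legendre's three-square theorem, handle the case where $n$ is already a sum of three squares directly, and otherwise observe that $2n$ cannot have the excluded form $4^c(8d+7)$. Your use of $2$-adic valuations simply makes explicit the parity reason behind the paper's one-line assertion that ``$2n$ is not of that form.''
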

\begin{proof}
If $n$ is not of the form $4^k(8m+7)$, then by Legendre's theorem $n$ is a
sum of three squares. If $n$ is $4^k(8m+7)$, then $2n$ is not of
that form, so $2n$ is a sum of three squares.
\end{proof}
\begin{corollary}
$$u\in\Q\wedge u>0 \iff (\exists v_1, v_2, v_3\in\Q)\,
[1-u(v_1^2+v_2^2+v_3^2)]
[1-2u(v_1^2+v_2^2+v_3^2)]=0\Big.$$
\end{corollary}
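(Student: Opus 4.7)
The plan is to verify both implications directly, invoking the preceding lemma on three squares for the forward direction and using the nonnegativity of sums of rational squares for the backward direction.

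For the forward direction, I would write a positive rational $u$ as $a/b$ with $a,b$ positive integers and apply the lemma to the natural number $n=ab$. If $ab=x_1^2+x_2^2+x_3^2$ for integers $x_i$, then setting $v_i:=x_i/a$ yields $v_1^2+v_2^2+v_3^2=ab/a^2=b/a=1/u$, so the first factor $1-u(v_1^2+v_2^2+v_3^2)$ vanishes. Otherwise $2ab=x_1^2+x_2^2+x_3^2$, and setting $v_i:=x_i/(2a)$ yields $v_1^2+v_2^2+v_3^2=2ab/(4a^2)=b/(2a)=1/(2u)$, so the second factor $1-2u(v_1^2+v_2^2+v_3^2)$ vanishes. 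The key choice is $n=ab$ (rather than, say, $a$ or $b$ alone), so that the integer identity rescales by $1/a$ or $1/(2a)$ into a sum of three rational squares equal to the desired reciprocal.

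For the backward direction, assume rationals $v_1,v_2,v_3$ make the product vanish, and set $s:=v_1^2+v_2^2+v_3^2\in\Q_{\ge 0}$. Then $us=1$ or $2us=1$; either way $s\neq 0$, so in fact $s>0$, and $u$ equals $1/s$ or $1/(2s)$, a positive rational.

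I do not expect either direction to be hard once the lemma is in hand. The only subtlety to watch is that a vanishing factor must force $s>0$ and hence both $u\in\Q$ and $u>0$; this is what rules out spurious solutions with $u$ nonpositive or irrational, and it falls out immediately from the observation that if $s=0$ then both factors equal $1$.
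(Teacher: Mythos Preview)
Your proof is correct and follows essentially the same route as the paper's: apply the three-squares lemma to a suitable integer built from a fraction representation of $u$ to get either $1/u$ or $1/(2u)$ as a sum of three rational squares, and use nonnegativity of sums of squares for the converse. The only cosmetic difference is that the paper takes $n$ to be the product of the numerator and denominator of $1/(2u)$ rather than your $n=ab$ with $u=a/b$, but the two choices lead to the same pair of cases and the same rescalings.
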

\begin{proof}
The left-to-right direction follows from the lemma with
$n$ as the product of the numerator and denominator of $1/(2u)$.
The right-to-left direction holds because any sum of three squares is
non-negative, and twice such a sum is non-negative also.
\end{proof} 

In particular, this leads to the following $\exists^{3d}$ and
$\forall\exists^3$ normal forms over $\Q$:
For $\varphi$ in conjunctive normal form as above, we have
$$
\varphi\iff \exists v_1,\dots,v_{3d}\;\sum_i\Big[\prod_j t_{ij}
\prod_k(1-u_{ik}V_i)(1-2u_{ik}V_i)\Big]^2=0.
$$
where $V_i=v_{3i-2}^2+v_{3i-1}^2+v_{3i}^2$. Also
\begin{align*}
\varphi \iff \forall v\,\exists w_1,w_2,w_3\,
&\Big[1-w_1\prod_i(v-i)\Big]
\Big[\sum_i\prod_{h\neq i}(v-h)\prod_j t_{ij}
\prod_k(1-u_{ik}W)(1-2u_{ik}W)\Big]=0.
\end{align*}
where $W=w_1^2+w_2^2+w_3^2$.

These forms may or may not be minimal; we provide them to
show the possibility of a number-theoretic Boolean elimination over $\Q$,
where quantifier elimination is not available.

\section{Conclusion}

The results here show that the Boolean structure of constructible or
semialgebraic sets can be uniformly represented with a single polynomial
equation under either an $\exists\forall$ or a $\forall\exists$ prefix for
$\C$, or even under a simple $\exists$ prefix for $\R$, and that those
quantifier patterns are optimal. We hope it will be useful to have those
results conveniently assembled here.

These results also lead to questions about various extensions of them.
\begin{itemize}
\item Can the $\exists$ results about $\R$ be simplified or optimized for
smaller degrees, and can the $\exists^3$ results about $\Q$ be reduced to
fewer quantifiers?
\item Can every universally quantified formula
be normalized to $\forall\exists$ form with a single equation, and can every
existentially quantified formula be normalized to $\exists\forall$ form?
\item What variants of these constructions are available over $\C$ when the
Boolean combinations may be countable as in $\mathcal{L}_{\omega_1,\omega}$
rather than finite, and the language may be expanded beyond polynomials
to include exponentials or other univariate entire functions?
\end{itemize}
These questions point beyond elementary methods toward a deeper study of
Boolean elimination.

\end{document}